\documentclass[11pt,reqno]{amsart}
\usepackage[margin=1in]{geometry}
\usepackage{amsmath,amssymb,amsthm}
\usepackage[hidelinks]{hyperref}
\usepackage{thmtools}
\usepackage[capitalize]{cleveref}
\usepackage[noadjust]{cite}
\usepackage{xcolor}

\newtheorem{theorem}{Theorem}[section]
\newtheorem{proposition}[theorem]{Proposition}

\newtheorem{question}[theorem]{Question}
\theoremstyle{remark}

\numberwithin{equation}{section}

\newcommand{\F}{\mathbb F}
\newcommand{\E}{\mathbb E}
\newcommand{\X}{\mathbf X}
\newcommand{\Y}{\mathbf Y}

\newcommand{\paren}[1]{\left(#1\right)}
\newcommand{\sqb}[1]{\left[#1\right]}
\DeclareMathOperator{\ex}{ex}

\title{$K_{2,t+1}$-free graphs with many copies of $K_{t,t}$}
\author{Cosmin Pohoata}
\author{Jonathan Tidor}
\author{Hung-Hsun Hans Yu}

\thanks{Pohoata was supported by NSF grant DMS-2246659. Yu was supported by a Jane Street Graduate Fellowship.}

\address{Department of Mathematics, Emory University, Atlanta, GA 30322, USA}
\email{cosmin.pohoata@emory.edu}

\address{Department of Mathematics, Princeton University, Princeton, NJ 08544, USA}
\email{\{jtidor,hansonyu\}@princeton.edu}

\date{}

\begin{document}

\begin{abstract}
For every fixed integer $t\geq 3$, we construct an $n$-vertex $K_{2,t+1}$-free graph containing $\Omega_t(n^2)$ copies of $K_{t,t}$. Combined with a simple counting argument, this shows that
\[
\ex(n,K_{t,t},K_{2,t+1})=\Theta_t(n^2).
\]
This answers a question of Spiro.
\end{abstract}

\maketitle

\section{Introduction}

For graphs $H$ and $F$, let $\ex(n,H,F)$ denote the maximum number of copies of $H$ in an $n$-vertex graph containing no copy of $F$. This generalization of the classical Tur\'an number was first systematically studied by Alon and Shikhelman~\cite{AS16}.

Answering a question of Spiro, we determine the following generalized Tur\'an number.

\begin{theorem}\label{thm:main}
For every fixed integer $t\ge 3$,
\[
\ex(n,K_{t,t},K_{2,t+1})=\Theta_t(n^2).
\]
In particular, there exists a constant $c_t>0$ such that for every sufficiently large integer $n$ there is a $K_{2,t+1}$-free graph on $n$ vertices containing at least $c_t n^2$ copies of $K_{t,t}$.
\end{theorem}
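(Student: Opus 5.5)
The upper bound is a short double count. Let $G$ be an $n$-vertex $K_{2,t+1}$-free graph, so any two distinct vertices have at most $t$ common neighbours. Take a copy of $K_{t,t}$ in $G$ with parts $A$ and $B$, and pick two vertices $a_1,a_2\in A$. Their common neighbourhood contains $B$, which has size $t$, so it must equal $B$. Symmetrically, for any two vertices $b_1,b_2\in B$ the common neighbourhood of $b_1,b_2$ is exactly $A$. So the copy is determined by the unordered pair $\{a_1,a_2\}$: first $B=N(a_1)\cap N(a_2)$, then $A=N(b_1)\cap N(b_2)$. Hence the number of copies of $K_{t,t}$ is at most $\binom n2$. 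This needs only $t\ge 2$ and gives $\ex(n,K_{t,t},K_{2,t+1})=O(n^2)$.

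For the lower bound I need an $N$-vertex $K_{2,t+1}$-free graph with $\Omega_t(N^2)$ copies of $K_{t,t}$, built for $N$ ranging over a set of integers with bounded consecutive ratios (for instance $N\approx q^2/t$ over suitable prime powers $q$). Given such graphs, I handle arbitrary $n$ by taking the largest admissible $N\le n$ and adding $n-N$ isolated vertices. This loses only a constant factor.

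By the upper bound argument, $\Omega(N^2)$ copies forces a positive fraction of all vertex pairs $\{u,v\}$ to satisfy three conditions:
\begin{itemize}
\item[(i)] $u$ and $v$ have exactly $t$ common neighbours, forming a set $B$;
\item[(ii)] every pair inside $B$ has exactly $t$ common neighbours, forming a single set $A$;
\item[(iii)] $u,v\in A$ and $A$ is complete to $B$.
\end{itemize}
I would therefore look for an algebraic, F\"uredi-type construction. Take vertices to be nonzero vectors of $\F_q^2$ (or $\F_q^3$) modulo a multiplicative subgroup or subfield structure of order $\approx t$, and join $u\sim x$ when some bilinear or norm form $\langle u,x\rangle$ lands in a prescribed set $S$.

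In F\"uredi's graph ($S=H$, a subgroup of $\F_q^*$ of order $t$) condition (i) holds for every linearly independent pair. The common neighbours of $u,v$ are the classes $x_h$ with $\langle u,x_h\rangle=1$ and $\langle v,x_h\rangle=h$ for $h\in H$. Condition (ii) then asks for $t$ classes $y=\alpha u+\beta v$ satisfying $\alpha+\beta h\in H$ for all $h\in H$. That is, I need $t$ affine maps sending $H$into $H$, which fails for a plain subgroup when $t\ge3$. So the key design step is to choose $S$, the quotienting group and the form so that $S$ has a large affine (or projective) stabiliser. Concretely, I want about $t$ inequivalent maps $h\mapsto \alpha+\beta h$ (or Möbius maps) preserving $S$, while keeping the count of solutions for every pair at most $t$, so that the graph stays $K_{2,t+1}$-free. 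Natural candidates are a subfield $\F_s\subset\F_q$ or a coset structure related to it, possibly working over a cubic extension with a norm form so that the symmetries of $S$ act sharply transitively on the relevant $t$-sets.

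Once the right $S$ is fixed, the proof has three verification steps. First, for every pair of distinct vertices, solve the two linear conditions to bound the number of common neighbours by $t$; this gives $K_{2,t+1}$-freeness. Second, exhibit for each generic pair $u,v$ the explicit $t$-sets $A\ni u,v$ and $B$ and check that $A$ is complete to $B$. Third, note that each $K_{t,t}$ is counted at most $\binom t2$ times, so the number of copies is $\gtrsim N^2/t^2$.

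I expect the main obstacle to be the design step: finding an $S$ whose symmetry group is rich enough to close up the $K_{t,t}$ without creating pairs with $t+1$ common neighbours. I would first try $t=3$ by hand and then generalise using subfields of order tied to $t$.
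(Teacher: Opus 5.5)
Your upper bound is correct and is exactly the paper's argument, and padding with isolated vertices to reach every $n$ is fine. The gap is the lower bound, which is the whole content of the theorem. You give a search plan rather than a construction, and the concrete framework you describe cannot succeed.

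Take vertices $(\F_q^2\setminus\{0\})/\Gamma$ with $\Gamma\le\F_q^\times$ of bounded order, and $u\sim x$ iff $\langle u,x\rangle\in S$. Well-definedness requires $\Gamma S=S$. Suppose $0\in S$, or some $c=s_1/s_2$ with $s_1,s_2\in S$ lies outside $\Gamma$. Then $u$ and $cu$ are distinct vertices sharing every $x$ with $\langle u,x\rangle=0$ (respectively $=s_2$). That is at least about $q/|\Gamma|>t$ common neighbours. So $K_{2,t+1}$-freeness forces $0\notin S$ and $SS^{-1}\subseteq\Gamma$, i.e.\ $S$ is a single coset of $\Gamma$. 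After rescaling the form, you are back to F\"uredi's graph with $|\Gamma|\le t$ in place of $t$. If $|\Gamma|<t$, every pair has fewer than $t$ common neighbours, so there is no $K_{2,t}$. If $|\Gamma|=t$, your own observation applies: $\alpha+\beta H\subseteq H$ with $\alpha\beta\neq0$ gives $t\alpha=0$ on summing over $H$. As in the paper's appendix, this shows the graph is $K_{3,t}$-free. So no choice of $S$ or subfield-type subgroup rescues this approach, and the $\F_q^3$ and norm-form variants are not specified enough to check.

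The idea you are missing is to thin out the vertex sets rather than the set of allowed values of the form. The paper takes the point--plane incidence graph $x\cdot y=1$ on $X\sqcup Y$. Here $X,Y\subseteq\F_q^3$ are independent random sets that meet every line in at most $t$ points, and meet any given line in exactly $t$ points with probability $1/t!-o(1)$.

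Two vertices $x_1,x_2\in X$ have all their common neighbours in $Y\cap H_{x_1}\cap H_{x_2}$. This lies on a line or is empty, which gives $K_{2,t+1}$-freeness. There are more than $q^4$ lines $\ell$ avoiding the origin. Whenever $|X\cap\ell|=|Y\cap\ell^*|=t$ (probability about $(1/t!)^2$ by independence), these two sets span a $K_{t,t}$, where $\ell^*$ is the dual line. Since $n\le 2tq^2$, this yields $\Omega_t(q^4)=\Omega_t(n^2)$ copies in expectation.

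The set $X$ is the zero set of a uniformly random polynomial of degree at most $t$. Its restriction to any line is a uniform univariate polynomial of degree at most $t$, so it has exactly $t$ roots with probability about $1/t!$. All points on lines contained in the zero set are then deleted; for $t\ge3$ this affects a given line only with probability $O(1/q)$. Nothing in your proposal plays the role of these evasive sets, so the lower bound remains unproved.
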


We note that there are quite a few works studying $\ex(n,K_{a,b},K_{c,d})$ -- the generalized Tur\'an problem for complete bipartite vs. complete bipartite -- for various values of $a,b,c,d$. See, e.g., the survey \cite[Section 4.3]{GP25} and the references contained therein. However, to the best of our knowledge, this is the first nontrivial result in the regime where $c<a\leq b<d$.

The upper bound in \cref{thm:main} is elementary. Indeed, let $G$ be an $n$-vertex $K_{2,t+1}$-free graph. For distinct vertices $v_1,v_2$, any copy of $K_{t,t}$ on vertex set $V\sqcup W$ with $v_1,v_2\in V$ must have $W$ contained in the common neighborhood $N(v_1)\cap N(v_2)$. By the $K_{2,t+1}$-free hypothesis, this common neighborhood has size at most $t$. Hence there is at most one choice for $W$. Once $W$ is fixed, note that $V$ must lie in
$N(W)\subseteq N(w_1)\cap N(w_2)$ for any distinct vertices $w_1,w_2\in W$. Again this is a set of size at most $t$, so there is at most one choice for $V$. Thus there is at most one possible $K_{t,t}$ once $v_1,v_2$ are chosen. Since there are $\binom n2$ choices for $v_1,v_2$, we obtain
\[
\ex(n,K_{t,t},K_{2,t+1})=O(n^2).
\]
Thus the content of \cref{thm:main} is a matching lower bound. 

Our construction is geometric. We let $G=(X\sqcup Y,E)$ be a point-plane incidence graph in $\F_q^3$ where $X\subseteq\F_q^3$ is a set of points so that each line contains at most $t$ points and a positive proportion of lines contain exactly $t$ points. $Y$ is a set of planes with the dual property: each line is contained in at most $t$ planes and a positive proportion of lines are contained in exactly $t$ planes. Such an incidence graph is always $K_{2,t+1}$-free; we will show later how to construct $X,Y$ so that the graph has many copies of $K_{t,t}$.

In some sense, the idea of looking a point-plane incidence graph in $\F_q^3$ is inspired by the recent work of Milojevi\'c, Sudakov, and Tomon~\cite{MST24}. In that work the authors use the fact that in such a graph, any copy of $K_{3,3}\setminus\{e\}$ can be completed to a copy of $K_{3,3}$; a similar property is crucial for our construction so that the elementary upper bound argument gives an asymptotically tight estimate on the number of $K_{t,t}$'s.

The sets $X,Y$ used in our construction are subspace evasive sets, first studied by Pudl\'ak and R\"odl~\cite{PR04}. In particular, these sets are $(1,t)$-subspace evasive sets in $\F_q^3$, meaning that any line (i.e., 1-dimensional affine subspace) contains at most $t$ points of the set. Large subspace evasive sets were first constructed explicitly by Dvir and Lovett~\cite{DL12}. In \cite{ST24}, Sudakov and Tomon then also showed that one can construct large subspace evasive sets using the randomized algebraic method, in the spirit of Bukh~\cite{Buk15} (as pointed out in \cite{ST24}, Conlon had also observed this independently). We will show that a slight modification of this randomized algebraic construction has the additional property that we need: for each line, the probability that it contains exactly $t$ points of the set is bounded away from zero.

Finally, to put our result in perspective, we note that F\"uredi gave a construction of an $n$-vertex $K_{2,t+1}$-free graph with $(\sqrt t/{2}+o(1))n^{3/2}$ edges~\cite{Fur96}. By the K\H{o}v\'ari--S\'os--Tur\'an theorem, this is the maximum possible number of edges~\cite{KST54}. However, we point out that this construction does not have many copies of $K_{t,t}$; in fact we will show in the appendix that it is $K_{3,t}$-free.

\subsection*{Acknowledgments}
We would like to thank Sam Spiro for bringing this problem to our attention.

\section{Incidence graphs from subspace evasive sets}
\label{sec:construction}

A set $X\subseteq\F_q^n$ is called $(k,c)$-subspace evasive if every $k$-dimensional affine subspace of $\F_q^n$ contains at most $c$ points of $X$. We first describe our main tool which constructs $(1,t)$-subspace evasive sets in $\F_q^3$ where each line has exactly $t$ points with positive probability.

\begin{proposition}
\label{prop:subspace-evasive}
Given $t\geq 3$ and a prime power $q$, there exists a random set $\X \subseteq \F_q^3$ with the following property. For each line $\ell\subset\F_q^3$, we have $|\X\cap \ell|\leq t$ and
\[\Pr(|\X\cap\ell|=t)=\frac1{t!}-o_{t;q\to\infty}(1).\]
\end{proposition}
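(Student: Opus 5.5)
Let $\X$ be the common zero set of random polynomials, in the style of Bukh's randomized algebraic method as used by Sudakov and Tomon. Concretely, let $f_1,f_2\in\F_q[x,y,z]$ be independent, uniformly random polynomials of degree at most $D$, and set
\[
\X=\{p\in\F_q^3: f_1(p)=f_2(p)=0\}.
\]
The degree $D$ should be large compared to $t$. Zero sets of two random polynomials generically cut out a curve, and a line meets such a curve in at most $\deg$-many points. That bound is too large, so I would modify the construction.

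The better choice is to take $\X$ to be the zero set of three polynomials. Write $f_1=g_1$ with $\deg g_1=t$ in a specific form, so that restricted to any line $\ell$ the only relevant part is a polynomial of degree exactly $t$ in the line parameter. Then $|\X\cap\ell|\le t$ holds deterministically unless that restriction vanishes identically. The two auxiliary random polynomials $f_2,f_3$ of higher degree are used to kill the lines where the restriction of $f_1$ vanishes.

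The key computation is as follows. For a fixed line $\ell=\{a+sb\}$, the restriction $f_1(a+sb)$ is a polynomial $P_\ell(s)$ of degree at most $t$. If the coefficients of $f_1$ of degree at most $t$ are chosen uniformly, this restriction map is surjective onto polynomials of degree at most $t$. So $P_\ell$ is uniformly distributed among such polynomials.

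A uniform random polynomial of degree at most $t$ over $\F_q$ has exactly $t$ distinct roots with probability tending to $1/t!$. Indeed, the number of monic polynomials of degree $t$ that split into distinct linear factors is $\binom qt$, and
\[
\binom qt\Big/q^t\to\frac1{t!}.
\]
The leading coefficient is nonzero with probability $1-1/q$.

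The remaining issue is the lines $\ell$ with $P_\ell\equiv0$, since then $\ell\cap Z(f_1)=\ell$. To handle them, I would intersect with a further random hypersurface and delete points lying on bad lines. Simply deleting them could destroy the exact-$t$ property for good lines, so that needs care.

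The cleaner fix uses the planes instead. For each line, it is enough that $\X\cap\ell$ equals the root set of $P_\ell$. So I would define
\[
\X=Z(f_1)\cap Z(f_2),
\]
where $f_2$ is an independent random polynomial of degree $\ge t$. Whenever $P_\ell\not\equiv0$, we have $|\X\cap\ell|\le t$, and $|\X\cap\ell|=t$ requires $f_2$ to vanish at the $t$ roots. That probability is only $q^{-t}$, which is bad.

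So the right approach is to remove bad points: let $\X$ consist of the points $p\in Z(f_1)$ that lie on no line contained in $Z(f_1)$. Then the bound $|\X\cap\ell|\le t$ is automatic. I expect the main obstacle to be showing that, for a fixed line $\ell$ with $P_\ell$ having $t$ roots, with probability $1-o(1)$ none of those roots lies on a line contained in $Z(f_1)$. For this I would use the first moment method. A cubic-or-higher surface $Z(f_1)$ contains a line through a given point $p$ only if the roughly $t$ coefficients of the restriction to each of the $q^2+q+1$ directions through $p$ vanish. Conditioning on $\ell$ and on $P_\ell$ (a linear condition), each other line through $p$ still has a restriction that is nearly uniform: it is jointly uniform with $P_\ell$ since the two lines are distinct. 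So each such line is contained in $Z(f_1)$ with probability about $q^{-t}$ (the restriction has $t+1$ coefficients, one of which is already fixed by the shared point). A union bound over $t$ points and $O(q^2)$ lines gives $O(t\,q^{2-t})=o(1)$, using $t\ge3$. The delicate parts are verifying this joint uniformity of restrictions to two intersecting lines, and handling the $1/t!$ asymptotics uniformly.
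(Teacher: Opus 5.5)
Your final construction (a uniform random $\mathbf f$ of degree at most $t$, deleting every point that lies on a line contained in $Z(\mathbf f)$) is exactly the paper's, and your analysis is correct and essentially the same. Your direct count $(1-\frac1q)\binom qt q^{-t}$ of polynomials with $t$ distinct roots matches the paper's formula, and your conditional union bound $t(q^2+q)q^{-t}$ is valid: the joint-uniformity input you flagged follows immediately from an affine change of coordinates that sends the two lines to coordinate axes. The paper avoids conditioning altogether by a plain first-moment bound, since each of the $q^3+q^2+1$ lines meeting $\ell$ lies in $Z(\mathbf f)$ with probability $q^{-t-1}$ by marginal uniformity alone, giving $O(q^{2-t})=O(q^{-1})$.
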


Using this result, we can deduce the main theorem.

\begin{proof}[Proof of \cref{thm:main}]
Let $\X,\Y\subseteq\F_q^3$ be independent copies of the random set produced by \cref{prop:subspace-evasive}. Let $G=(\X\sqcup\Y,E)$ be the bipartite graph with vertex sets $\X,\Y$ where $x\in\X\subseteq\F_q^3$ is adjacent to $y\in\Y\subseteq\F_q^3$ if $x_1y_1+x_2y_2+x_3y_3=1$.

Since we can partition $\F_q^3$ into $q^2$ lines, we see that $|\X|,|\Y|\leq tq^2$. Thus $G$ is a graph on $n=|\X|+|\Y|\leq 2tq^2$ vertices.

First we show that $G$ is $K_{2,t+1}$-free. For a point $x\in\F_q^3$, let $H_x=\{y\in\F_q^3:x_1y_1+x_2y_2+x_3y_3=1\}$. This is a plane (unless $x=0$ in which case it is the empty set). Now suppose $x_1,x_2\in\X$ and $y_1,\ldots,y_{t+1}\in\Y$ form a copy of $K_{2,t+1}$ in $G$. By definition, this means that $y_1,\ldots,y_{t+1}\in H_{x_1}\cap H_{x_2}$. The latter set is a line or empty, so by definition it contains at most $t$ points of $\Y$, contradiction. The same argument also shows that $G$ contains no $K_{2,t+1}$ with $t+1$ vertices on the left and 2 vertices on the right.

Next we show that $G$ contains many copies of $K_{t,t}$. For each line $\ell\subset\F_q^3$, let $\ell^*=\{z\in\F_q^3:\ell\subset H_z\}$ be the dual line. If $\ell$ does not pass through the origin, then $\ell^*$ is a line in $\F_q^3$ that does not pass through the origin. To see this, write $\ell=x+\F_q y$. Then $z\in\ell^*$ if $(x_1+sy_1)z_1+(x_2+sy_2)z_2+(x_3+sy_3)z_3=1$ for all $s\in\F_q$. This occurs if $x_1z_1+x_2z_2+x_3z_3=1$ and $y_1z_1+y_2z_2+y_3z_3=0$. These equations are satisfied on a line if $x,y$ are linearly independent, i.e., if $\ell$ does not pass through the origin.

Now there are more than $q^4$ lines $\ell\subset\F_q^3$ which do not pass through the origin. For each line, there is a $1/t!-o(1)$ chance that $|\X\cap\ell|=t$ and, independently, a $1/t!-o(1)$ chance that $|\Y\cap\ell^*|=t$. Each such $\ell$ produces a copy of $K_{t,t}$, so we conclude that the expected number of copies of $K_{t,t}$ in $G$ is at least
\[q^4\paren{\frac1{t!}-o(1)}^2=\Omega_t(q^4)=\Omega_t(n^2),\]
where the last line follows since $n\leq 2tq^2$ deterministically.

Therefore there exists a graph $G$ with the desired properties.
\end{proof}

\section{Subspace evasive sets via randomized algebraic constructions}

In \cite{DL12}, Dvir and Lovett proved that for $c$ sufficiently large in terms of $n,k$, there exists a $(k,c)$-subspace evasive set in $\F_q^n$ of size $\Omega(q^{n-k})$. An alternative proof of this result was given by Sudakov and Tomon where the set $X\subseteq\F_q^n$ is taken to be the common zero set of $k$ random polynomials~\cite[Theorem 3.1]{ST24}.

In the case of $(1,t)$-subspace evasive sets, one can consider $X\subseteq\F_q^n$ to be the zero set of a single random polynomial of degree at most $t$. In this case the analysis is particularly simple and it is not to hard to show, using the methods of~\cite{Buk15,ST24}, that such a set is $(1,t)$-subspace evasive with high probability for $t\geq 4$. Furthermore, using these methods, it is not too hard to show that $X$ has the additional property that we need: for each line, the probability that it contains exactly $t$ points of $X$ is bounded away from zero.

To prove \cref{prop:subspace-evasive} in the case $t=3$, we use a slight modification of this construction which can be analyzed by the same methods.

\begin{proof}[Proof of \cref{prop:subspace-evasive}]
Let $\mathbf f\in\F_q[x_1,x_2,x_3]_{\leq t}$ be a uniform random polynomial of degree at most $t$. Define $\mathbf{X_0}\subseteq\F_q^3$ to be the zero set of $\mathbf f$. 

We first claim that for each line $\ell$, the restriction $\mathbf f|_{\ell}$ is a uniform random univariate polynomial of degree at most $t$. To see this, write $\ell=\alpha+\F_q\beta$ with $\beta\neq 0$. Then there exists some $i\in[3]$ so that $\beta_i\neq 0$; without loss of generality, assume that $i=1$. Let us write
\[\mathbf f(x_1,x_2,x_3)=\sum_{i+j+k\leq t}\mathbf a_{ijk}x_1^ix_2^jx_3^k\]
where the $\mathbf a_{ijk}$ are independent uniform random elements of $\F_q$. To analyze $\mathbf f(\alpha+\beta s)$, first reveal the randomness of $\mathbf a_{ijk}$ for all $i,j,k$ with $(j,k)\neq(0,0)$. Then reveal $\mathbf a_{t00},\ldots,\mathbf a_{000}$ one-by-one. Note that the coefficient of $s^d$ in $\mathbf f(\alpha+\beta s)$ is $a_d+\mathbf a_{d00}\beta_1^d$ where $a_d$ is determined by $\alpha,\beta$ and the coefficients previously revealed. Since $\mathbf a_{d00}$ is a uniform random element of $\F_q$ and $\beta_1\neq 0$, we see that each coefficient of $\mathbf f(\alpha+\beta s)$ is a uniform random element of $\F_q$, independent from all previous coefficients.

In particular, we conclude that $\mathbf f|_{\ell}\equiv 0$ if and only if all $t+1$ of these coefficients are zero, so 
\[\Pr(\mathbf f|_{\ell}\equiv 0)=\frac1{q^{t+1}}.\]

Now fix $t$ distinct collinear points $p_1,\ldots,p_t\in\ell$. Consider the map $\varphi\colon \F_q[x_1,x_2,x_3]_{\leq t}\to\F_q^t$ defined by the evaluations
\[
\varphi(\mathbf f)=\paren{\mathbf f(p_1),\ldots,\mathbf f(p_t)}.
\]
Note that this map is surjective, since by polynomial interpolation there exists a univariate polynomial $\mathbf f|_\ell$ of degree at most $t$ which takes any prescribed values on $t$ distinct points. Such a polynomial can then be extended to a polynomial $\mathbf f$ on $\F_q^3$. Now since $\varphi\colon \F_q[x_1,x_2,x_3]_{\leq t}\twoheadrightarrow\F_q^t$ is a linear map, the preimage of each point has the same size. As $\mathbf f\in\F_q[x_1,x_2,x_3]_{\leq t}$ is uniform, we conclude that the evaluation vector $\varphi(\mathbf f)\in\F_q^t$ is uniform as well.

By linearity of expectation, the above argument implies that the expected number of $t$-tuples of points of $\ell$ on which $\mathbf f$ vanishes is
\[\E\sqb{\binom{|\mathbf{X_0}\cap \ell|}t}=\binom qt q^{-t}=\frac1{t!}-o_{t;q\to\infty}(1).\]

Since $\mathbf f|_\ell$ is a univariate polynomial of degree at most $t$, we know that $|\mathbf{X_0}\cap\ell|\leq t$ unless $\mathbf f|_\ell\equiv 0$. Thus we can write
\[\E\sqb{\binom{|\mathbf{X_0}\cap \ell|}t}=\binom tt\Pr(|\mathbf{X_0}\cap \ell|=t)+\binom qt\Pr(|\mathbf{X_0}\cap \ell|=q)=\Pr(|\mathbf{X_0}\cap \ell|=t)+\binom qt q^{-t-1}.\]
Combining the two equations gives
\[\Pr(|\mathbf{X_0}\cap \ell|=t)=\paren{1-\frac1q}\binom qtq^{-t}=\frac1{t!}-o_{t;q\to\infty}(1).\]

To finish the proof, create $\X\subseteq \mathbf{X_0}$ by removing all points on any line $\ell$ which lies in $\mathbf{X_0}$. Clearly we have $|\X\cap\ell|\leq t$ for every line $\ell$. Call a line $\ell$ \emph{bad} if we removed any point of $\ell$ in producing $\X$ from $\mathbf{X_0}$.

To bound the probability that $\ell$ is bad, note that there are $1+q(q^2+q)=q^3+q^2+1$ lines in $\F_q^3$ which share a point with $\ell$ (including $\ell$ itself). The expected number of these lines on which $\mathbf f$ vanishes identically is thus $(q^3+q^2+1)q^{-t-1}=O(q^{-1})$ since $t\geq 3$. Therefore by Markov's inequality, the probability that $\ell$ is bad is $O(q^{-1})$. Now if $\ell$ is not bad, we have $|\X\cap\ell|=|\mathbf{X_0}\cap\ell|$, so we conclude that there exists an absolute constant $C$ so that
\[\Pr(|\X\cap\ell|=t)\geq\paren{1-\frac 1q}\binom qt q^{-t}-\frac Cq=\frac1{t!}-o_{t;q\to\infty}(1)\]
for each line $\ell$.
\end{proof}

\section{Concluding remarks}

A natural question is if our main result generalizes to $K_{s,t+1}$-free graphs.

\begin{question}
\label{ques:larger-s}
For $3\leq s<t$, is it true that
\[\ex(n,K_{t,t},K_{s,t+1})=\Theta_{s,t}(n^s)?\]
\end{question}

As with the $s=2$ case, the upper bound is immediate. We point out that our construction does not generalize to large values of $s$.

One natural attempt is to consider a point-hyperplane incidence graph in $\F_q^{2s-1}$. In detail, take $X,Y\subseteq\F_q^{2s-1}$ and define $G=(X\sqcup Y,E)$ where $x,y$ are adjacent if $x\cdot y=1$. Let $X,Y\subseteq\F_q^{2s-1}$ be sets so that every $(s-1)$-dimensional affine subspace contains at most $t$ points of $X,Y$ and a positive proportion of these subspaces contain exactly $t$ points of $X,Y$. With these definition, $G$ will have many copies of $K_{t,t}$. Indeed, we will have $|X|,|Y|=\Theta(q^s)$ and $G$ will have $\Theta(q^{s^2})$ copies of $K_{t,t}$.

To see if $G$ is $K_{s,t+1}$-free, we note that if $U\sqcup V$ is a biclique, we must have $U\subseteq H$ and $V\subseteq H^*$ where $H$ is a $d$-dimensional affine subspace and $H^*$ is its dual, a $d^*$-dimensional affine subspace where $d+d^*=2s-2$. Now there will be no copies of $K_{s,t+1}$ where $d=d^*=s-1$ since any $(s-1)$-dimensional affine subspace contains at most $t$ points of $X,Y$. However, the case of $d=s-2$ and $d^*=s$ is problematic. There are many $s$-dimensional affine subspaces which contain at least $t+1$ points of $Y$, so to rule out copies of $K_{s,t+1}$, we need every $(s-2)$-dimensional affine subspace to contain at most $s-1$ points of $X$.

In other words, in addition to being a $(s-1,t)$-subspace evasive, we need that $X,Y$ are also $(s-2,s-1)$-subspace evasive. For $s=2$ this is trivial, but for larger values of $s$ this is a ``general position assumption'', stating that any $s$ points are affinely independent. Sudakov and Tomon showed that such sets must be very small by using the Hamming bound.

For our construction, we need $X,Y$ to have size $\Theta(q^s)$; by \cite[Theorem 1.2]{ST24} there is no such $(s-2,s-1)$-subspace evasive set in $\F_q^{2s-1}$ for $s\geq 4$. To be precise, this result for $s\geq 10$ follows from the stated result, but the bound for all $s\geq 4$ follows easily from their proof. In particular, their proof implies that any $(s-2,s-1)$-subspace evasive set in $\F_q^{2s-1}$ has size at most $O_s(q^{\frac{2s}{\lfloor s/2\rfloor}-1})$.

It remains an interesting open problem to see if this construction can be made to work for $s=3$, or if a different construction can resolve \cref{ques:larger-s} positively for general $s$.

\appendix

\section{\texorpdfstring{F\"uredi's graph contains no copy of $K_{3,t}$}{F\"uredi's graph contains no copy of K3t}}

For completeness, we record the following observation about F\"uredi's construction of an extremal $K_{2,t+1}$-free graph. To define the graph, let $q$ be a prime power with $t\mid (q-1)$, and let $H\le \F_q^\times$ be the multiplicative subgroup of order $t$. The vertex set of F\"uredi's graph $G_t(q)$ is
\[
V(G_t(q))=\paren{\F_q^2\setminus\{(0,0)\}}/H,
\]
where the $H$-action on $\F_q^2\setminus\{(0,0)\}$ is defined by $h(a,b)=(ha,hb)$. The edges are defined by $[a,b]\sim[x,y]$ if and only if $ax+by\in H$. F\"uredi proved that $G_t(q)$ is $K_{2,t+1}$-free, has $n=(q^2-1)/t$ vertices, and every vertex has degree $q$ or $q-1$~\cite{Fur96}. Consequently,
\[
e(G_t(q))=\frac{1}{2}qn+O(n)=\left(\frac{\sqrt t}{2}+o(1)\right)n^{3/2},
\]
matching the classical K\H{o}v\'ari--S\'os--Tur\'an upper bound~\cite{KST54}.

We show that graph $G_t(q)$ does not contain $K_{3,t}$. In fact, for special values of $q$ (specifically $q=t^2+1$), the graph is $K_{3,3}$-free~\cite{Liv21}. Though this result likely fails for general values of $q$, we use similar techniques to prove that $G_t(q)$ is always $K_{3,t}$-free.

In some sense this result is analogous to a result of Ball and Pepe about projective norm graphs. The projective norm graph $H(q,4)$ from \cite{ARS99} was first presented as an example of a dense $K_{4,7}$-free graph; Ball and Pepe~\cite{BP12} later showed that $H(q,4)$ also contains no copies of $K_{5,5}$ (and later generalized this result~\cite{BP16}). Likewise, in our setting, \cref{prop:furedi-no-K3t} shows that F\"uredi's dense $K_{2,t+1}$-free graph also excludes the larger biclique $K_{3,t}$.

\begin{proposition}
\label{prop:furedi-no-K3t}
For $t\geq 2$ and a prime power $q$ with $t\mid q-1$, the graph $G_t(q)$ contains no copy of $K_{3,t}$.
\end{proposition}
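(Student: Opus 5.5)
The plan is to pick representatives and normalize them so that the $K_{3,t}$ condition becomes a statement about an affine map sending $H$ into itself. Then a power-sum identity in $H$ forces a degeneracy.

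Suppose $G_t(q)$ contains a copy of $K_{3,t}$ with parts $\{[u_1],[u_2],[u_3]\}$ and $\{[v_1],\dots,[v_t]\}$, all $t+3$ classes distinct. Fix representatives $u_i\in\F_q^2\setminus\{0\}$. Since $u_1\cdot v_j\in H$ and $H$ acts on the representatives of $[v_j]$, we can rescale each $v_j$ by an element of $H$ so that $u_1\cdot v_j=1$. Two representatives of the same class that both satisfy this normalization differ by a scalar $h\in H$ with $h=1$. Hence the normalized vectors $v_1,\dots,v_t$ are pairwise distinct.

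Next we show $u_1,u_2$ are linearly independent. If $u_2=cu_1$, then $c=u_2\cdot v_1\in H$, so $[u_2]=[u_1]$, a contradiction. Therefore each $v_j$ is determined by the pair $(u_1\cdot v_j,\,u_2\cdot v_j)=(1,h_j)$ with $h_j\in H$. Since the $v_j$ are distinct, the $h_j$ are $t$ distinct elements of the group $H$ of order $t$, so $\{h_1,\dots,h_t\}=H$.

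Now write $u_3=au_1+bu_2$. Then $u_3\cdot v_j=a+bh_j\in H$ for every $j$, so the affine map $x\mapsto a+bx$ sends $H$ into $H$.
\begin{itemize}
\item If $b=0$, then $a=u_3\cdot v_1\in H$ and $[u_3]=[u_1]$, a contradiction.
\item If $b\neq0$, the map is injective, hence a bijection of $H$.
\end{itemize}

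The key step in the second case is to sum over $H$. The elements of $H$ are exactly the roots of $x^t-1$. For $t\ge2$ the coefficient of $x^{t-1}$ in this polynomial vanishes, so $\sum_{h\in H}h=0$. Summing $a+bh$ over $h\in H$ then gives
\[0=\sum_{h\in H}(a+bh)=ta+b\sum_{h\in H}h=ta.\]
Since $t\mid q-1$, $t$ is nonzero in $\F_q$, so $a=0$. Then $u_3=bu_2$ with $bH=H$, so $b\in H$ and $[u_3]=[u_2]$, again a contradiction.

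The main point needing care is the normalization bookkeeping. One must check that distinct classes $[v_j]$ give distinct normalized vectors, and hence that the $h_j$ exhaust $H$. It is this exhaustion that makes the affine map a bijection of $H$, which is what lets the sum identity apply. The remaining steps are routine.
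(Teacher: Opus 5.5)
Your proof is correct and follows essentially the same route as the paper: normalize so that $u_1\cdot v_j=1$, use the linear independence of $u_1,u_2$ to identify the $t$ common neighbours with the elements of $H$, write $u_3=au_1+bu_2$, and sum the bijection $x\mapsto a+bx$ of $H$ (using $\sum_{h\in H}h=0$) to force $ta=0$. The only cosmetic difference is that you dispose of the degenerate cases $b=0$ and $a=0$ at the end, whereas the paper rules out $\alpha=0$ and $\beta=0$ up front via pairwise linear independence.
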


\begin{proof}
Assume for contradiction that $G_t(q)$ contains a copy of $K_{3,t}$ with parts $U=\{u,v,z\}$ and $W$. Choose representatives $u=[a,b]$ and $v=[a',b']$. First note that $(a,b)$ and $(a',b')$ are linearly independent over $\F_q$. Indeed, if $(a',b')=c(a,b)$ for some $c\in\F_q^\times$, then $c\not\in H$, since $u\ne v$ are distinct vertices. Let $w=[x,y]$ be adjacent to both $u$ and $u'$, then $ax+by\in H$ and $a'x+b'y=c(ax+by)\in H$. Since $H$ is a multiplicative subgroup, this implies $c\in H$, a contradiction.  Hence $(a,b)$ and $(a',b')$ are linearly independent.

For each $h\in H$, let $(x_h,y_h)\in\F_q^2$ be the unique solution of
\[
ax_h+by_h=1,
\qquad
a'x_h+b'y_h=h,
\]
and set $w_h=[x_h,y_h]$. The existence and uniqueness of $(x_h,y_h)$ follows since $(a,b),(a',b')$ are linearly independent over $\F_q$.

Next we claim that every common neighbor of $u$ and $v$ is one of the vertices $w_h$. Indeed, let $w=[x,y]$ be adjacent to both $u$ and $v$. Then $ax+by\in H$ and $a'x+b'y\in H$. Scaling $(x,y)$ by $(ax+by)^{-1}\in H$, we see that $w=[\tilde x,\tilde y]$ for some $\tilde x,\tilde y$ satisfying
\[
a\tilde x+b\tilde y=1,
\qquad
a'\tilde x+b'\tilde y=h
\]
for some $h\in H$. By uniqueness, $(\tilde x,\tilde y)=(x_h,y_h)$, so $w=w_h$. The vertices $w_h$ are distinct for distinct $h\in H$, so $u$ and $v$ have exactly $|H|=t$ common neighbors. Since $\{u,v,z\}\sqcup W$ is a copy of $K_{3,t}$ in $G_t(q)$, we conclude that $W=\{w_h:h\in H\}$. 

Now write $z=[c,d]$. Since $(a,b)$ and $(a',b')$ form a basis of $\F_q^2$,
there exist scalars $\alpha,\beta\in\F_q$ such that $(c,d)=\alpha(a,b)+\beta(a',b')$. Because $z$ is adjacent to every $w_h$, we have
\[
cx_h+dy_h\in H
\qquad\text{for all }h\in H.
\]
Combing this previous equations, this implies
\[
\alpha(ax_h+by_h)+\beta(a'x_h+b'y_h)=\alpha+\beta h\in H
\qquad\text{for all }h\in H.
\]
Thus $\alpha+\beta h\in H$ for all $h\in H$.

The argument at the beginning of the proof shows that $(a,b),(a',b'),(c,d)$ are pairwise linearly independent over $\F_q$. This implies that  $\alpha,\beta\ne 0$. Since $\beta\ne 0$, the $t$ values $\{\alpha+\beta h:h\in H\}$ are distinct. As they all belong to the $t$-element set $H$, we obtain $\alpha+\beta H=H$. Now $\sum_{h\in H} h = 0$, since $H$ is a nontrivial multiplicative subgroup of $\F_q^\times$. Summing the elements of $\alpha+\beta H=H$ gives
\[
t\alpha+\beta\sum_{h\in H}h=\sum_{h\in H}h=0,
\]
and therefore $t\alpha=0$. Since $t\mid (q-1)$, the characteristic of $\F_q$ does not divide $t$,
so $\alpha=0$, a contradiction. This contradiction shows that $G_t(q)$ contains no copy of $K_{3,t}$.
\end{proof}

\end{document}